\tikzset{individu/.style={draw,thick}}
\theoremstyle{plain}
\newtheorem{theorem}{Theorem}[section]
\newtheorem{corollary}[theorem]{Corollary}
\newtheorem{lemma}[theorem]{Lemma}
\newtheorem{remark}[theorem]{Remark}
\numberwithin{equation}{section}
\newcommand{\N}{\mathbb{N}}
\newcommand{\Z}{\mathbb{Z}}
\newcommand{\R}{\mathbb{R}}
\newcommand{\ind}[1]{\mathbf{1}_{\left\{#1\right\}}}
\newcommand{\e}{\mathrm{e}}
\newcommand{\dd}{\mathrm{d}}
\DeclareMathOperator{\E}{\mathbb{E}}
\renewcommand{\P}{\mathbb{P}}
\renewcommand{\epsilon}{\varepsilon}
\title{A model for an  epidemic with contact tracing and cluster isolation, and a detection paradox}
\author{Jean Bertoin\thanks{Institute of Mathematics, University of Zurich, Switzerland.} }
\date{ }
\begin{document}

\maketitle

\begin{abstract} We determine the distributions of some random variables related to a simple model of an epidemic with contact tracing and cluster isolation. This enables us to apply general limit theorems for super-critical Crump-Mode-Jagers branching processes. Notably,   we compute explicitly  the asymptotic proportion of isolated clusters with a given size amongst all isolated clusters, conditionally on survival of the epidemic. 
Somewhat surprisingly, the latter differs from the distribution of the size of a typical cluster at the time of its detection; and we explain the reasons behind this seeming paradox. 
\end{abstract}

\noindent \emph{\textbf{Keywords:}} Epidemic; Crump-Mode-Jagers branching process; contact-tracing and isolation, structured population model.

\medskip

\noindent \emph{\textbf{AMS subject classifications:}} 60J80,   92D25

\section{Introduction}

Predicting and controlling the evolution of epidemics has motivated mathematical contributions for a long time and generated a huge literature; let us merely point at the lecture notes \cite{BritPar} and references therein. Models involving contact tracing and isolation, which aims at reducing the transmissibility of infections, have raised a significant interest; see notably among others \cite{Balletal, Bansaye, Barlow, Huo, Lambert, MK, Metal, OkMu}. 
We present below a toy model in this framework, which is clearly oversimplified\footnote{Many important aspects such as  the possibility of recovery, the age-dependency of the contamination rate, the spacial locations and displacements of infected individuals,... are not taken into account.} and likely unrealistic for practical applications, but which is solvable in the sense that many quantities of interest can be computed explicitly. This model is close to the one introduced  recently by Bansaye, Gu, and Yuan \cite{Bansaye} as it will be discussed in the final section of this text.

We take only into account infected individuals,  implicitly assuming that there  is an infinite reservoir of healthy individuals susceptible to become infected at some point.
There is no death nor recovery, but we distinguish between the contagious individuals and those who have been isolated and hence have ceased to spread the epidemic.
The infected population grows with time as new individuals are contaminated; we suppose that a newly infected individual is always contaminated by a single contagious individual. Imagine further that when a contamination occurs, it can be either \textit{traceable}, for instance in the case of 
a contamination between two relatives, or \textit{untraceable}, for instance in the case when it occurs during a public event between two unrelated individuals. 
At any time, there is thus a natural partition of the infected population into \textit{clusters}, where two individuals are parts of the same cluster if and only if  the contamination path between those individuals can be fully traced.
Last, we suppose that individuals are randomly tested, and when a contagious individual is detected, then one isolates instantaneously its entire cluster. A newly infected individual is always contagious until it has been isolated and then ceases to contaminate further individuals forever.  We stress the distinction between detection, which acts on individuals, and isolation, which follows from detection of a contagious individual and applies to a whole cluster. Clusters consisting of contagious individuals are called active, and then isolated after detection of an infected individual.

We now turn this model into a simple stochastic evolution depending on three parameters, namely: 
\begin{itemize}
\item $\gamma >0$,  the contamination rate of a contagious individual,
\item $p\in(0,1)$, the probability   of traceability for a contamination event,
\item $\delta>0$,  the rate of detection for a contagious individual.
\end{itemize}
 In words, the probability that a contagious  individual at time $t$ contaminates some healthy individual
during the time interval $[t, t+ \dd t]$ is $\gamma \dd t$, and when this occurs, the probability that the contamination is traceable is $p$. Simultaneously, the probability that a contagious individual is detected during the time interval $[t, t+ \dd t]$ is $\delta \dd t$. We suppose that these events are mutually independent, simultaneously for all times and all contagious individuals. In particular, the probability that an active cluster of size $s$ at time $t$ is put into isolation during the time interval $[t, t+ \dd t]$  is $s\delta \dd t$. Last, we suppose for simplicity that at the initial time $t=0$, there is a single infected individual in the population, which we call the ancestor. 

The epidemic eventually  stops once all contagious individuals have been isolated, and we shall see that this occurs almost surely if and only if
the rate of detection is greater than or equal to the rate of untraceable contaminations, i.e. $\delta\geq (1-p)\gamma$. Note that this is independent of the rate $p\gamma$ of traceable contaminations. 
We are mostly interested in the super-critical case $\delta<(1-p)\gamma$ when the epidemic survives forever with strictly positive probability. 

Our main results in the super-critical regime specify  to our setting general limit theorems for Crump-Mode-Jagers branching processes. They show that the number of active, respectively, isolated, clusters counted with some characteristic grows exponentially fast in time with exponent $\alpha=\alpha(\gamma,p,\delta)$ given by the Malthusian parameter. The limits after rescaling involve  as universal factor (that is independent of the chosen characteristics) the terminal value of the so-called intrinsic martingale. As a consequence, conditionally on survival of the epidemic,  the empirical distribution of the sizes of active  clusters (respectively of isolated clusters) converges as time goes to infinity. More precisely, we will show that  the proportion of clusters of given size $k\geq 1$ amongst all active clusters at time $t$ tends to
\begin{equation} \label{E:prea}  c_a(1-\delta/\rho)^{k-1} {\mathrm B} (1+\alpha/\rho, k)
\end{equation}
 as $t\to \infty$,  whereas this proportion  amongst isolated clusters at time $t$ tends to
\begin{equation} \label{E:empisol}
 c_i(1-\delta/\rho)^{k-1} {\mathrm B} (\alpha/\rho, k+1),
 \end{equation}
 where $\rho=\delta + p \gamma$, $\mathrm B$ denotes the beta function, and $c_a$ and $c_i$ are the normalization factors.

Concretely,   the only observable variables at a given time in this model are  the isolated clusters, since, by definition, the active ones have not yet been detected.
Our results point at the following
rather surprising feature (at least for non-specialists of general branching processes or of structured population models). As, loosely speaking, isolated clusters are independent with the same distribution, one might expect that when the epidemic has spread for a long time, the empirical distribution of the isolated clusters should be close
to the law of a typical isolated cluster, that is the cluster generated by a typical contagious individual at the time when it is isolated.  However, it is easy to see that the size of a typical isolated cluster has the geometric distribution with success parameter $\delta/\rho$, so the probability that  a typical isolated cluster has size $k$
equals 
$$ \frac{\delta}{\rho}(1-\delta/\rho)^{k-1},$$
which differs from \eqref{E:empisol}. This is the detection paradox alluded to in the title of this work and  which will be explained in the last section. 
Note that the bias factor ${\mathrm B} (\alpha/\rho, k+1)$ in \eqref{E:empisol} decays as $k$ increases, 
which entails that in the large time limit,  the empirical isolated cluster is in fact stochastically smaller than the typical cluster. 

The plan of this text is as follows. In Section 2, we explain how the model can be recasted in terms of a Crump-Mode-Jagers branching process by focussing on the clusters. In Section 3, we describe the evolution of a typical cluster as a Yule process stopped at the time when it exceeds an independent geometric variable. This enables us to  derive a number of related statistics explicitly, in particular regarding the point process of untraceable contaminations which are induced. Our main results on the large time behavior of the epidemic in the super-critical regime are presented in Section 4; they are merely deduced from well-known general limit theorem for Crump-Mode-Jagers branching processes using the explicit formulas of Section 3. In Section 5, we first compare the model by Bansaye, Gu and Yuan and the approach there with ours. We notably observe that the Malthusian parameter $\alpha$ and the limiting distribution \eqref{E:prea} solve a natural eigenproblem when the evolution of active cluster sizes is viewed as an age-structured population model.
We then explain the detection paradox, and finally, we briefly  discuss the relation between \eqref{E:prea} and the classical Yule-Simon distribution.

\section{The Crump-Mode-Jagers branching process of clusters} 
Although we introduced the epidemic model from the perspective of individuals, it will be convenient for its analysis to rather look at clusters and their evolutions as time passes.
Specifically, imagine that we  create an (unoriented) edge between the infector and the infected at the time when 
a contamination occurs; each edge is further labelled traceable or untraceable, depending on the type of the contamination. 
If we ignore the labels of edges, this endows the  infected population at any given time with a genealogical  tree structure which is rooted at ancestor. 
Plainly this tree structure grows as new individuals are contaminated and new traceable or untraceable edges are added. The reader may find Figure \ref{fig:cluster} above useful to visualize the notions that will be introduced. 

Any pair of infected individuals is connected by a unique segment in the tree, which we call contamination path. Two individuals 
belong to the same cluster if and only if their contamination path contains only traceable edges, and more generally, the number of untraceable edges along the contamination path 
between two individuals only depends on the two clusters to which these individuals  belong. We then define the generation of a given cluster as the number of untraceable edges along the contamination path between  any individual in that cluster and the ancestor.

\begin{figure}
\begin{center}
\includegraphics[height=6cm]{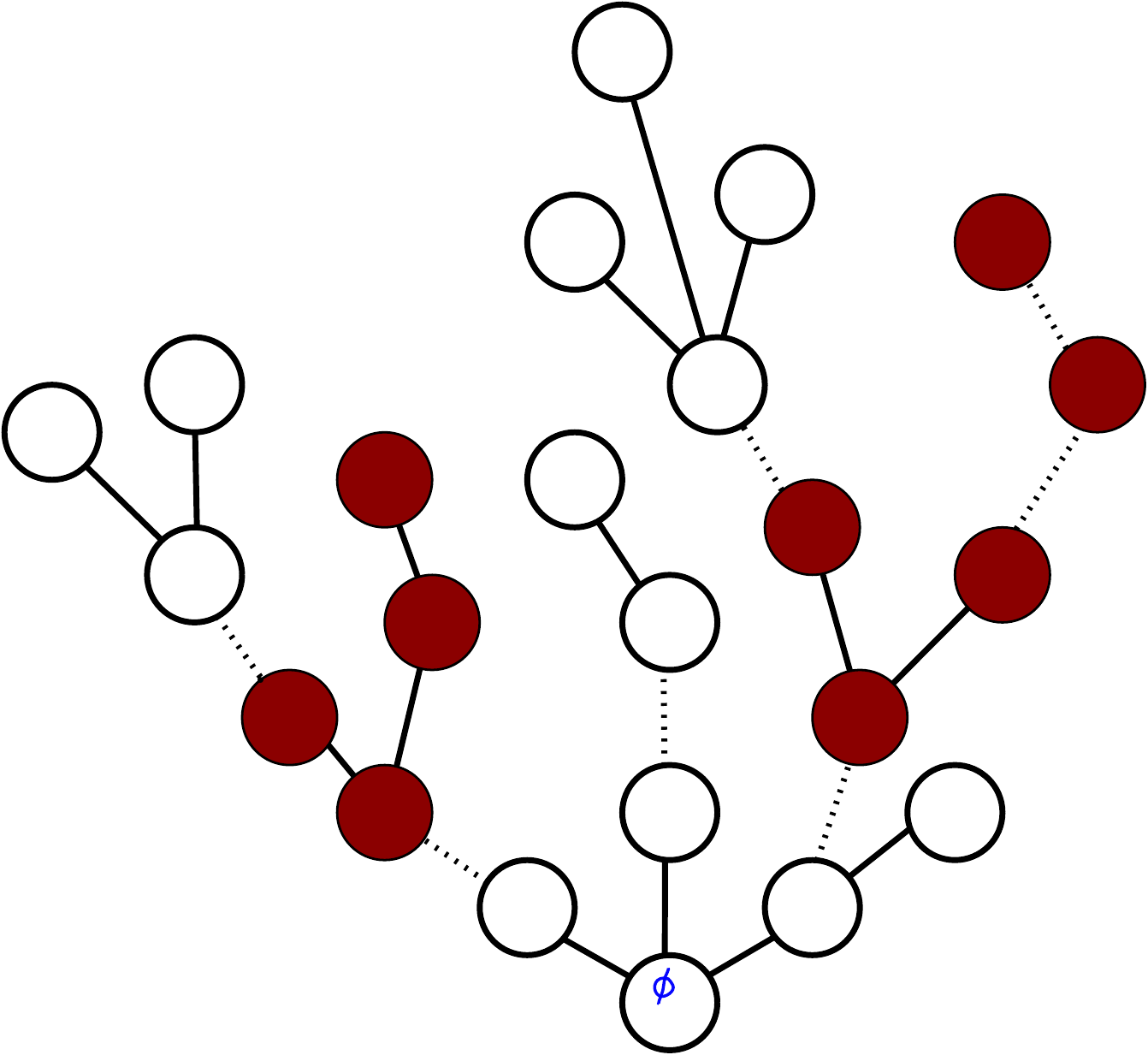}
\end{center}
\caption{ Graphical representation of the epidemic at a given time. The ancestor is the vertex at the bottom of the figure.  Vertices in red represent  contagious individuals, vertices in white  individuals who have been isolated. Full edges indicate traceable contaminations, and dotted ones untraceable contaminations. 
Clusters consist of subsets of vertices connected by full edges. In turn clusters are connected by dotted edges. There are four active clusters: two at the first generation  with sizes $4$ and $3$, one at the third generation with size $1$, and one at the fourth generation with size $1$. There are four isolated clusters: the ancestor cluster with sizes $5$, one cluster with size $2$ at the first generation, and two clusters with sizes $4$ and $3 $ at the second generation. }
\label{fig:cluster}
\end{figure}

We next observe that backtracking contaminations endows  clusters  with a natural genealogy, which in turn  enables us to view the epidemic  model as a so-called Crump-Mode-Jagers branching process\footnote{The arguments  in this section  are rather robust, in the sense that they remain valid for more sophisticated  versions of the model. For instance, one could incorporate recovery, let the contamination rates depend on the age of the infection, etc. However, the quantitative results in the next section are  much more fragile; notably the calculations for the key Lemma \ref{L1} there can not be adapted even to deal with recovery or death. 
}; see  \cite[Chapter 6]{JagersB} as well as \cite{Jagers, Nerman, JagersNerman} for classical background,  and also \cite[Section 5]{HolJan} for a more recent survey with further references. For this purpose, we shall index each cluster  by a finite sequence of positive integers, that is by a vertex $u$ of the Ulam-Harris tree ${\mathcal U}=\bigcup_{n=0}^{\infty} \N^n$, such that the length $|u|$ of $u$ corresponds to the generation of the cluster. By convention, the empty sequence $\varnothing$ with length $0$ is used to label the cluster containing the ancestor. Clusters at the first generation are those such that there is a single untraceable edge along the contamination path from an infected individual in this cluster to the ancestor. They are indexed by $\N^1= \N=\{1,2, \ldots\}$ according to the increasing order of their birth times, that is times at which an individual in the ancestral cluster causes an untraceable contamination and generates a new cluster. For the sake of definitiveness,
  we agree that when the ancestral cluster generates only $k$ untraceable contaminations until it is isolated, then the clusters indexed by $k+1, k+2, \ldots$ are fictitious clusters born at time $\infty$.  This is only a formality, and of course we shall only be concerned with non-fictitious clusters.  By an obvious iteration, we label clusters at the $n$-th generation by $u=(u_1, \ldots, u_n)\in \N^n$ for any $n\geq 0$. It should be plain that the genealogy of clusters does not change with time, in the sense that once a cluster is born, its label will remain the same in the future.

For any $u\in {\mathcal U}$, if the cluster  labelled by $u$ is not fictitious, then we write $\zeta_u$ for the age (that is the time elapsed from the birth-time) at which this cluster is isolated. We also write $\xi_u$ for the simple point process on $[0,\infty)$ of the ages  at which this cluster is involved into untraceable contaminations, that is, generates new clusters. So $\xi_u([0,t])$ is the number of children clusters generated when the cluster reaches age $t$, and in particular $\xi_u([\zeta_u,\infty))=0$. 
Finally, we write $C_u=(C_u(t), t\geq 0)$, where $C_u(t)$ is the size  of this cluster at time $t<\zeta_u$ and we agree that $C_u(t)=0$ whenever $t\geq \zeta_u$. In words,  $C_u$ is the process of 
 the number of contagious individuals in that cluster as a function of its age (recall that infected individuals are no longer contagious once they have been isolated); in particular $C(\zeta-)$ is the size reached by the cluster at the time when it is detected. 
 The most relevant information about the evolution of clusters and hence about the epidemic is encoded by the family of pairs
 $${\mathbf C}_u=(C_u, \xi_u), \qquad u\in \mathcal U,$$
 where we agree for definitiveness that $C_u\equiv 0$ and $\xi_u\equiv 0$ when the cluster indexed by $u$ is fictitious. 
Of course, ${\mathbf C}_u$ does not enable us to recover the subtree structure of the cluster indexed by $u$, but this is irrelevant for the questions we are interested in. 

 It should be intuitively clear that the distribution of the ancestral cluster ${\mathbf C}_{\varnothing}$ determines that of the whole process $\left({\mathbf C}_u\right)_{u\in \mathcal U}$. 
 More precisely, let us write ${\mathbf C}=(C, \xi)$ for a pair distributed as ${\mathbf C}_{\varnothing}$, which  we think of as describing the evolution of a \textit{typical cluster}.
Then it is readily checked that conditionally on $\xi_{\varnothing}(\R_+)=k$, ${\mathbf C}_1, \ldots, {\mathbf C}_{k}$ are $k$ i.i.d. copies of ${\mathbf C}$ which are further independent of ${\mathbf C}_{\varnothing}$. More generally, it follows by iteration that for every $n\geq 1$ and any  $u_1, \ldots, u_k\in \N^{n}$, conditionally on  the event that none of the clusters  ${\mathbf C}_{u_1}, \ldots, {\mathbf C}_{u_k}$ are fictitious (which is measurable with respect to the family
$\left({\mathbf C}_v: |v|<n\right)$ ), ${\mathbf C}_{u_1}, \ldots, {\mathbf C}_{u_k}$ are $k$ i.i.d. copies of ${\mathbf C}$ which are further independent of $\left({\mathbf C}_v: |v|<n\right)$. In other words, $\left({\mathbf C}_u\right)_{u\in \mathcal U}$ generates a Crump-Mode-Jagers branching process where the evolution of typical  elements is distributed as ${\mathbf C}$.

\begin{remark} \label{R:1}
 If we interpret the isolation time $\zeta$ of an active cluster  as the death-time, and if we further view the size $C(t)$ at time $t$ as measuring some ``age'' of the cluster, in the loose sense that this quantity grows with time until death occurs, then we are essentially in the framework of  age-structured population models; see for instance \cite[Section II.E]{Kot}. This aspect will be useful in the forthcoming Section 5.1. In this area, we further refer to \cite{Huo} for a different model for contact tracing in an epidemic in terms of a disease age structured population. 
\end{remark}

\section{Statistics of a typical cluster}
We discuss here some basic  statistics of the typical cluster ${\mathbf C}=(C, \xi)$ in terms of the parameters $(\gamma,p,\delta)$ of the model. Recall that 
the integer valued process $C$ is absorbed at $0$ at the time 
 $$\zeta=\inf\{t\geq 0: C(t)=0\}$$ 
 when this cluster is detected and isolated, and that $\xi$ is the point process of times at which  non-traceable contaminations occur.

It is convenient to set now
\begin{equation} \label{E:rho}
\rho \coloneqq \delta +p\gamma,
\end{equation}
and recall that a Yule process with rate $\rho>0$ refers to a pure birth process with birth rate $\rho k$ from any state $k\geq 1$ and  started from $1$. 

\begin{lemma}\label{L1} The process $(C(t), t\geq 0)$ has the same law as
$$(\ind{Y(t)\leq G} Y(t), t\geq 0),$$
where
$Y=(Y(t), t\geq 0)$ is a Yule process with rate $\rho$, and 
$G$ an independent geometric variable with success probability $\delta/\rho$, viz. with tail distribution function
$$
\P(G > k)=  (1-\delta/\rho)^{k}  , \qquad k\geq 0.$$
\end{lemma}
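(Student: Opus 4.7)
The plan is to identify the law of $C$ directly from the model as a pure-birth process with killing, and then to check that $\mathbf{1}_{\{Y\leq G\}}\, Y$ has the same infinitesimal dynamics.

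First I would read off the rates of $C$. By construction, the size of the typical cluster can change only through two kinds of events: a traceable contamination of some individual in the cluster, which bumps $C$ by $+1$, or a detection of some individual, which instantaneously isolates the whole cluster and sends $C$ to $0$. Untraceable contaminations do occur at rate $k(1-p)\gamma$ when $C=k$, but they do not modify $C$; they only give birth to sibling clusters, whose times are recorded in $\xi$. By the mutual independence built into the model, when $C(t)=k\geq 1$ the sojourn time of $C$ in state $k$ is the minimum of independent exponentials of rates $kp\gamma$ and $k\delta$, hence exponential with parameter $k\rho$, and the next transition of $C$ is a birth ($k\to k+1$) with probability $p\gamma/\rho$ and an absorption ($k\to 0$) with probability $\delta/\rho$, independently of the past. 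Thus $C$ is a time-homogeneous Markov jump process on $\N\cup\{0\}$ started from $1$, with $0$ absorbing, and the rates just described.

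Next I would verify that $\tilde C(t) \coloneqq \mathbf{1}_{\{Y(t)\leq G\}}\, Y(t)$ has the same dynamics. Before absorption, $\tilde C$ coincides with $Y$, so from state $k$ the next jump of $\tilde C$ occurs after an $\exp(k\rho)$ time. The key ingredient is the memoryless property of $G$: since $\P(G>k)=(1-\delta/\rho)^k$ for $k\geq 0$, one has $\P(G=k\mid G\geq k)=\delta/\rho$ for every $k\geq 1$. Conditionally on $\tilde C$ being in state $k$ (which forces $G\geq k$), the next jump of $Y$ corresponds to an absorption of $\tilde C$ (namely, $Y$ crossing $G$) with conditional probability $\delta/\rho$ and to a true birth to state $k+1$ otherwise, with probability $1-\delta/\rho = p\gamma/\rho$; independence of $Y$ and $G$ guarantees that this is independent of the past. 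Hence $\tilde C$ is a Markov jump process with exactly the same rates and the same initial state as $C$, and the two therefore share the same law.

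The main obstacle, if any, is conceptual rather than technical: one has to be comfortable with the fact that $C$ alone -- stripped of the bookkeeping of untraceable contaminations and of the sibling clusters they spawn -- remains Markovian, and with the simple pure-birth-plus-killing structure used above. This is guaranteed by the memorylessness of all the exponential clocks in the model and by the fact that the competing rates $kp\gamma$, $k(1-p)\gamma$ and $k\delta$ depend only on the current size $k$.
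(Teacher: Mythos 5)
Your proposal is correct and follows essentially the same route as the paper: identify $C$ as a Markov jump process with sojourn rate $k\rho$ in state $k$ and independent jump type ($k\to k+1$ with probability $p\gamma/\rho$, absorption with probability $\delta/\rho$), then match this with $\ind{Y\leq G}Y$. The only difference is that you spell out the final verification (via the memorylessness of $G$) that the paper compresses into the phrase ``classical properties of independent exponential variables,'' which is a harmless elaboration.
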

\begin{proof}
The process $C$ is a continuous time Markov chain on $\Z_+=\{0,1, \ldots\}$, which starts from $1$ at time $0$ and is absorbed at the cemetery state $0$. 
Recall that only traceable contaminations contribute to the growth of the cluster, that they occur at rate $p\gamma$ per contagious individual, 
and that each individual in the cluster is detected at rate $\delta$.

We see that when the chain is at some state $k\geq 1$, its next jumps occurs
after a waiting time with the exponential distribution with parameter $k(p\gamma + \delta)= k\rho$, and independently of this waiting time,  the state after that jump is $k+1$ with probability $p\gamma/\rho$ and is $0$ with complementary probability $\delta/\rho$.
In particular, the size reached by the cluster when it is isolated is a
geometric variable with success probability $\delta/\rho$. Our claim follows from the classical properties of independent exponential variables. 
\end{proof}

Lemma \ref{L1} shows in particular that the size $C(\zeta-)$ of the typical isolated cluster has the geometric distribution with success probability $\delta/\rho$.
The one-dimensional marginal laws of the typical cluster size  process as well as  the joint distribution 
 of the time of isolation $\zeta$ and  $C(\zeta-)$  follow readily.
\begin{corollary} \label{C1} For every $t\geq 0$, we have
$$\P(C(t)=k)=  (1-\delta/\rho)^{k-1} (1-\e^{-\rho t})^{k-1} \e^{-\rho t}\qquad \text{for } k\geq 1,$$
and
$$ \P(C(t)=0)= \P(\zeta \leq t )=1- \frac{\rho}{\rho +\delta(\e^{\rho t}-1)}.$$
Furthermore, we have also
$$\P(C(\zeta-)=k, t\geq \zeta)= \frac{ \delta}{\rho}(1-\delta/\rho)^{k-1} (1-\e^{-\rho t})^k   \qquad \text{for } k\geq 1.$$
\end{corollary}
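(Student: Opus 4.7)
The plan is to use Lemma \ref{L1} to reduce everything to joint calculations involving the Yule process $Y$ and the independent geometric variable $G$. Two standard facts do almost all the work. First, the one-dimensional distribution of a Yule process with rate $\rho$ is geometric: $\P(Y(t)=j)=\e^{-\rho t}(1-\e^{-\rho t})^{j-1}$ for $j\geq 1$, equivalently $\P(Y(t)>j)=(1-\e^{-\rho t})^j$. Second, the tail formula in the statement of Lemma \ref{L1} gives $\P(G=k)=(\delta/\rho)(1-\delta/\rho)^{k-1}$ and $\P(G\geq k)=(1-\delta/\rho)^{k-1}$ for $k\geq 1$.

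For the first identity, I would observe that by Lemma \ref{L1},
\[
\{C(t)=k\}\,\stackrel{d}{=}\,\{Y(t)=k,\ G\geq Y(t)\}=\{Y(t)=k,\ G\geq k\},
\]
and since $Y$ and $G$ are independent, the probability factorises as $\P(Y(t)=k)\P(G\geq k)$, which after substitution is exactly $(1-\delta/\rho)^{k-1}(1-\e^{-\rho t})^{k-1}\e^{-\rho t}$. For the second identity, I would sum the geometric-like series from the first identity: $\P(C(t)\geq 1)=\e^{-\rho t}/\bigl(1-(1-\delta/\rho)(1-\e^{-\rho t})\bigr)$, simplify the denominator to $\e^{-\rho t}+(\delta/\rho)(1-\e^{-\rho t})$, and then $\P(C(t)=0)=1-\P(C(t)\geq 1)$ reduces to $1-\rho/(\rho+\delta(\e^{\rho t}-1))$ after multiplying numerator and denominator by $\rho\e^{\rho t}$.

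For the third identity, the key observation is that since $Y$ is non-decreasing with unit jumps, the isolation time is $\zeta=\inf\{t:Y(t)>G\}$ and hence $C(\zeta-)=Y(\zeta-)=G$ almost surely. Therefore
\[
\{C(\zeta-)=k,\ \zeta\leq t\}=\{G=k,\ Y(t)>G\}=\{G=k,\ Y(t)>k\},
\]
and independence gives $\P(G=k)\P(Y(t)>k)=(\delta/\rho)(1-\delta/\rho)^{k-1}(1-\e^{-\rho t})^k$, as required.

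The calculation is essentially routine once Lemma \ref{L1} is in hand; the only point requiring a little care is the algebraic simplification in the second identity, and the only conceptual step is recognising that because $Y$ jumps by one, $C(\zeta-)$ equals $G$ itself rather than some random value larger than $G$. No real obstacle is expected.
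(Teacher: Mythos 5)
Your proposal is correct and follows essentially the same route as the paper: factorising $\P(Y(t)=k,\,G\geq k)$ and $\P(G=k,\,Y(t)>k)$ by independence, using the geometric law of $Y(t)$, and summing a geometric series for the second identity. The extra remark that $C(\zeta-)=G$ because $Y$ has unit jumps is a correct and welcome clarification, but the argument is the paper's.
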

\begin{proof}
It suffices to write for $k\geq 1$ that
\begin{align*}
\P(C(t)=k)
&= \P(Y(t)=k , G\geq k) \\
&= \P(Y(t)=k) \P(G \geq k),
\end{align*}
and recall that $Y(t)$ has the geometric distribution  with success probability $\e^{-\rho t}$. 
Then summation for $k\geq 1$ yields the second formula of the statement.
We get the third formula similarly, writing  for $k\geq 1$ that
$$
\P(C(\zeta-)=k, t\geq \zeta)= \P(G=k,  Y(t)>k)
= \P(G = k) \P(Y(t)> k).
$$
\end{proof}

We next turn our attention  to the point process $\xi$ at which new clusters are generated, and write
$$Z_1\coloneqq \xi(\R_+)$$
for the total number of (non-fictitious) clusters that the typical cluster begets. 
Its distribution is obtained by a slight  variation of the argument for Lemma \ref{L1}
and this entails the criterion for extinction of the epidemic that has been announced in the introduction. 

\begin{lemma}\label{L:7}
The variable $1+Z_1$ follows the geometric distribution with success probability $\frac{\delta}{(1-p) \gamma + \delta}$. In particular, $Z_1\in L^r(\P)$ for all $r\geq 1$, 
$$ \E(Z_1)= (1-p) \gamma / \delta ,$$
and as a consequence, the total number of infected individuals is finite (in other words, the epidemics eventually ceases) almost surely if and only if
$$(1-p)\gamma \leq \delta.$$
\end{lemma}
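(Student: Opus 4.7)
The plan is to refine the argument used for Lemma~\ref{L1}, this time keeping track of untraceable contaminations in addition to detections and traceable contaminations. When the typical cluster has size $k\geq 1$, three independent exponential clocks are competing, at total rates $k\delta$, $kp\gamma$ and $k(1-p)\gamma$, respectively for detection, traceable contamination and untraceable contamination. By the usual race of independent exponential variables, the sequence of successive event types is i.i.d., each event being of type detection, traceable or untraceable with probabilities $\delta/(\gamma+\delta)$, $p\gamma/(\gamma+\delta)$ and $(1-p)\gamma/(\gamma+\delta)$, respectively; crucially, these probabilities do not depend on the current cluster size $k$, since all three rates are proportional to $k$.

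The cluster is isolated at the first occurrence of a detection, and $Z_1$ counts the number of untraceable events before that time. Since traceable events are irrelevant for this count, I would \emph{thin} the sequence by retaining only detections and untraceable events. Conditionally on being of one of these two types, an event is a detection with probability $\delta/(\delta+(1-p)\gamma)$, independently of the past. Consequently, $1+Z_1$, which is the rank of the first detection in this thinned i.i.d.\ binary sequence, follows the geometric distribution on $\{1,2,\ldots\}$ with success probability $\delta/((1-p)\gamma+\delta)$, as claimed. All moments of $Z_1$ are then finite, and a direct computation from the geometric law gives $\E(Z_1)=(1-p)\gamma/\delta$.

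For the extinction criterion, I would invoke the CMJ structure of Section~2: the generation-by-generation sizes of the cluster tree form a Galton-Watson process with offspring distribution that of $Z_1$. Each individual cluster is almost surely finite by Lemma~\ref{L1}, so the total infected population is finite if and only if the cluster tree is finite, which in turn is equivalent to almost sure extinction of this Galton-Watson process. Since $Z_1$ is non-degenerate, the classical Galton-Watson dichotomy asserts that extinction is almost sure if and only if $\E(Z_1)\leq 1$, i.e.\ $(1-p)\gamma\leq\delta$.

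The main conceptual step is the thinning observation in the second paragraph; it hinges entirely on the fact that the three competing rates are all linear in $k$, so that the type of each event is independent of the current cluster size. If this scaling property failed (for instance under age-dependent contamination rates), one would instead have to analyse the joint Markov chain tracking $(C,Z_1)$ directly.
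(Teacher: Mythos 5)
Your proposal is correct and follows essentially the same route as the paper: a competition of exponential clocks all proportional to the current size $k$, so that event types form an i.i.d.\ sequence, followed by discarding the traceable events (your ``thinning'' is the paper's iteration over the first untraceable-or-detection instant $\tau$), and then the standard Galton--Watson dichotomy applied to the generation sizes $Z_n$ for the extinction criterion. No gaps.
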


\begin{proof}
Fix some arbitrary time $t\geq 0$, and work conditionally on the event that at time  $t$, the typical cluster has size $k\geq 1$ and is still active. Consider the first event after time $t$ at which either
there is a new traceable or untraceable contamination, or the cluster is detected. 
The probability that this event is due to an untraceable contamination is $(1-p)\gamma/(\gamma+ \delta)$, whereas probability that this event is due to detection is $\delta/(\gamma+ \delta)$. 
In the remaining case, the size of the cluster increases by one unit.

The  probabilities above depend neither on $t$ nor $k$, and it follows by iteration that if we now introduce  the first instant  $\tau$ after $t$ at which either 
an untraceable contamination occurs or the cluster is detected, then independently of $C(\tau)$,
the probability that $\tau$ is the time of an untraceable contamination equals 
$\frac{(1-p)\gamma}{(1-p) \gamma + \delta}$ (this is the failure probability). 
Another iteration yields our first claim, and the formula for the first moment of $Z_1$ follows.

Finally, 
if we write $Z_n$ for the number of (non-fictitious) clusters at the $n$-th generation, then 
$(Z_n, n\geq 0)$ is a Galton-Watson process with reproduction law distributed according to $Z_1$. 
So if $\delta < (1-p)\gamma$, there is a strictly positive probability that this Galton-Watson process survives for ever, in which case the 
total number of infected individuals is obviously infinite. Otherwise, the Galton-Watson process becomes eventually extinct almost-surely, there are only finitely many 
(non-fictitious) clusters, each of which consisting of finitely many infected individuals. 
\end{proof}

Last, we introduce the intensity measure $\mu$ of the point process $\xi$,
$$\mu(t) \coloneqq \E(\xi([0,t])), \qquad t\geq 0.$$

\begin{corollary}\label{C2} For every $t\geq 0$, there is the identity
$$  \mu(t) =   (1-p)\frac{\gamma}{\delta} \left(1- \frac{ 1}{1+\delta(\e^{\rho t}-1)/\rho}\right).$$
\end{corollary}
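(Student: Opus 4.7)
The plan is to exploit the infinitesimal structure of $\xi$: conditionally on the cluster trajectory $C$, each of the $C(s)$ contagious individuals present at age $s$ triggers untraceable contaminations at rate $(1-p)\gamma$ (by thinning each individual's contamination Poisson process with probability $1-p$), so $\xi$ admits the stochastic intensity $(1-p)\gamma\,C(s)$. The usual compensation formula and Fubini then yield
\[
\mu(t) \;=\; (1-p)\gamma \int_0^t \E(C(s))\,\dd s,
\]
which reduces the problem to evaluating $\int_0^t \E(C(s))\,\dd s$.

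To compute $\E(C(s))$, I would read off the one-dimensional marginal from Corollary \ref{C1}. Setting $q(s) \coloneqq (1-\delta/\rho)(1-\e^{-\rho s})$, summing the standard geometric series gives
\[
\E(C(s)) \;=\; \e^{-\rho s} \sum_{k\geq 1} k\, q(s)^{k-1} \;=\; \frac{\e^{-\rho s}}{(1-q(s))^2}.
\]
The key algebraic step is the identity $\e^{\rho s}(1 - q(s)) = 1 + (\delta/\rho)(\e^{\rho s}-1)$; writing $h(s)$ for this common quantity, one obtains the clean form $\E(C(s)) = \e^{\rho s}/h(s)^2$.

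Finally, since $h'(s) = \delta\,\e^{\rho s}$, the integrand is exactly $h'(s)/(\delta\,h(s)^2)$, an exact derivative, and
\[
\int_0^t \frac{\e^{\rho s}}{h(s)^2}\,\dd s \;=\; \frac{1}{\delta}\!\left(1 - \frac{1}{h(t)}\right),
\]
using $h(0)=1$. Multiplying by $(1-p)\gamma$ produces the announced formula. There is no genuine obstacle here; the only step requiring a little care is spotting the algebraic identity that turns $(1-q(s))^2$ into $\e^{-2\rho s} h(s)^2$, equivalently using the substitution $v = h(s)$ in the final integral.

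As a sanity check, the result can be rewritten as $\mu(t) = \E(Z_1)\,\P(\zeta \leq t)$ using Lemma \ref{L:7} and the middle formula of Corollary \ref{C1}; this pleasant Wald-like factorization is a consequence of the compensator identity $\delta\int_0^t \E(C(s))\,\dd s = \P(\zeta \leq t)$, which reflects the fact that, conditionally on the underlying growth process, $\zeta$ is the first point of a Cox process with intensity $\delta C$.
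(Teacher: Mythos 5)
Your proof is correct and follows essentially the same route as the paper: the conditional intensity argument giving $\dd\mu(t)=(1-p)\gamma\,\E(C(t))\,\dd t$, the computation of $\E(C(t))=\e^{\rho t}/(1+\delta(\e^{\rho t}-1)/\rho)^2$ from Corollary \ref{C1}, and the elementary integration. You simply spell out the geometric-series summation and the exact-derivative integration that the paper leaves implicit; the closing observation that $\mu(t)=\E(Z_1)\,\P(\zeta\leq t)$ is a nice extra consistency check but not part of the argument.
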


\begin{proof} Indeed, the conditional probability given the process $C$ that a non-traceable contamination event occurs during the time-interval $[t, t+\dd t]$ equals
$(1-p)\gamma C(t)\dd t$, and as a consequence,
$$\dd  \mu(t) =  (1-p)\gamma\E(C(t))\dd t. $$
We deduce from Corollary \ref{C1} that 
$$\E(C(t))=  \frac{ \e^{\rho t}}{(1+\delta(\e^{\rho t}-1)/\rho)^2}.$$
The formula in the statement follows. 
\end{proof}
We note that letting $t\to \infty$ in Corollary \ref{C2} yields
$$\E(\xi(\R_+))=(1-p)\gamma/\delta,$$
in agreement with Lemma \ref{L:7}.

\section{The Malthusian behavior}
We shall assume throughout this section that 
\begin{equation} \label{E:surcrit} \delta < (1-p)\gamma,
\end{equation}
so that the epidemic survives with strictly positive probability.
More precisely, one immediately deduces from Lemma \ref{L:7} that the probability of extinction equals $\delta/((1-p)\gamma)$, which is the smallest solution to the equation $\E(x^{Z_1})=x$.
We shall derive here the main results of this work, simply by specifying in our setting some fundamental results of  Nerman \cite{Nerman} on the asymptotic behavior of Crump-Mode-Jagers branching processes with random characteristics.
 We   start by introducing some of the key actors in this framework. 

Consider the Laplace transform of the intensity measure of untraceable contaminations for a typical cluster,
$$
{\mathcal L}(x)= \int_0^{\infty} \e^{-xt} \dd \mu(t), \qquad x\geq 0.
$$
Since ${\mathcal L}(0)=(1-p)\gamma/\delta >1$, the equation $ {\mathcal L}(x)=1$
possesses a unique solution $\alpha=\alpha(\gamma,p,\delta)\in(0,\infty)$, called the \textit{Malthusian parameter}. That is, thanks to Corollary \ref{C2},
$$(1-p)\gamma \int_0^{\infty}   \frac{ \e^{(\rho-\alpha) t}}{(1+\delta(\e^{\rho t}-1)/\rho)^2}\dd t = 1,$$
or equivalently, in a slightly simpler form, using the change of variables $x=\e^{-\rho t}$,
\begin{equation} \label{E:alpha}
(1-p)\gamma \rho \int_0^1   \frac{ x^{\alpha/\rho}}{((\rho-\delta) x + \delta)^2}\dd x = 1.
 \end{equation}
We  further set
\begin{equation} \label{E:beta}
\beta = - {\mathcal L}'(\alpha)= (1-p)\gamma \int_0^{\infty}   t \frac{ \e^{(\rho-\alpha) t}}{(1+\delta(\e^{\rho t}-1)/\rho)^2}\dd t;
\end{equation}
plainly $\beta\in (0,\infty)$. 

Next, it is convenient to use the notation
$$\langle m,f\rangle \coloneqq \sum_{n=1}^{\infty} f(n) m(n),$$
where $m=(m(n), n\in \N)$ is a finite measure  on $\N$ 
and  $f: \N\to \R_+$ a generic nonnegative function.
We introduce two important measures $m^a$ and $m^i$ related to typical active and isolated clusters  respectively, by
$$\langle m^a,f\rangle =\int_0^{\infty} \e^{-\alpha t} \E(f(C(t)), t<\zeta) \dd t ,$$
and
$$ \langle m^i,f\rangle = \int_0^{\infty} \e^{-\alpha t} \E(f(C(\zeta-)), \zeta\leq t ) \dd t .$$
These two measures can be determined explicitly from Corollary \ref{C1}, using the notation
$$\mathrm B(x,y)= \frac{\Gamma(x) \Gamma(y)}{\Gamma(x+y)}=\int_0^1 s^{x-1}(1-s)^{y-1}\dd s , \qquad x,y>0,$$
for the beta function. Indeed, we then obtain from the change of variables $\e^{-\rho t} = s$, that for every $k\geq 1$:
\begin{align} \label{E:ma}
m^a(k) &= (1-\delta/\rho)^{k-1} \int_0^{\infty} \e^{-\alpha t} (1-\e^{-\rho t})^{k-1} \e^{-\rho t} \dd t 
\nonumber \\
&= \frac{1}{\rho} (1-\delta/\rho)^{k-1} {\mathrm B} (1+\alpha/\rho, k),
\end{align}
and 
\begin{align} \label{E:mi}
m^i(k) &=
\frac{\delta}{\rho} (1-\delta/\rho)^{k-1} \int_0^{\infty} \e^{-\alpha t} (1-\e^{-\rho t})^{k} \dd t
\nonumber \\
&= \frac{\delta}{\rho^2} (1-\delta/\rho)^{k-1} {\mathrm B} (\alpha/\rho, k+1). 
\end{align}

Finally, we introduce  
$$W_n=\sum_{u\in \N^n} \e^{-\alpha \sigma_u}, \qquad n\geq 0,$$
where  $\sigma_u$ stands for the birth-time of the cluster labelled by $u$ (so that $\sigma_u=\infty$ and $\e^{-\alpha \sigma_u}=0$ if this cluster is fictitious). The process $(W_n, n\geq 0)$ is a  martingale, often referred to as the intrinsic martingale; see Jagers \cite[Chapter 6]{JagersB}). Using the inequality $W_1\leq \xi(\R_+)$ and Lemma \ref{L:7}, we see that
 $$\E(W_1^2)<\infty$$ and the
uniform integrability of the intrinsic martingale follows, see e.g. \cite[Theorem 6.1]{Jagers}. We furthermore recall that its 
 terminal value $W_{\infty}$ is strictly positive on the event that the epidemic survives, and of course $W_{\infty}=0$ on the event that the epidemic eventually ceases. 

For the sake of simplicity, we focus on few natural  statistics of the epidemic at time $t\geq 0$. Given a generic nonnegative function $f: \N\to \R_+$, we agree implicitly  that $f(0)=0$ and write
$$ A^f(t) = \sum f(C_u(t-\sigma_u)),$$
where the sum is taken over all vertices $u$ in the Ulam-Harris tree $ \mathcal U$ such that the cluster labelled by $u$ is born at time $\sigma_u\leq t$ (note that only active clusters at time $t$ contribute to the sum). 
Turning our attention to isolated clusters, we write similarly 
$$ I^f(t) =\sum f(C_u(\zeta_u-))\ind{\sigma_u+\zeta_u\leq t},$$
where the sum is taken over all clusters  which are isolated at time $t$.
In the Crump-Mode-Jagers terminology, $A^f$ and $I^f$ are known as 
the processes counted with the random characteristics 
\begin{equation}\label{E:randchar}
\phi^a: t\mapsto f(C(t)) \ind{t<\zeta} \quad\text{and} \quad \phi^i: t\mapsto f(C(\zeta-)) \ind{t\geq\zeta},
\end{equation}
respectively.

Recall the notation above, and notably \eqref{E:alpha},
 \eqref{E:beta}, \eqref{E:ma} and  \eqref{E:mi}. We can now state the main result of this work. 

 \begin{theorem}\label{T1} Assume \eqref{E:surcrit} and let $f: \N\to \R_+$ with $f(n)=O(\e^{b n})$ for some $b< -\log(1-\delta/\rho)$. The following limits then hold almost surely and in $L^1(\P)$: 
$$\lim_{t\to \infty} \e^{-\alpha t} A^f(t) =  \beta^{-1} \langle m^a, f \rangle W_{\infty},$$
and
$$ \lim_{t\to \infty} \e^{-\alpha t} I^f(t)= \beta^{-1} \langle m^i, f \rangle W_{\infty}.$$
\end{theorem}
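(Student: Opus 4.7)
The plan is to recognise both limits as direct applications of the classical theorem of Nerman (see \cite[Theorem 5.4]{Nerman} or \cite[Chapter 6]{JagersB}) for processes counted with random characteristics in a supercritical Crump--Mode--Jagers branching process. The population of clusters $({\mathbf C}_u)_{u\in \mathcal U}$ was already identified in Section 2 as such a CMJ process with reproduction point process $\xi$, and by \eqref{E:randchar} the statistics $A^f(t)$ and $I^f(t)$ are precisely the processes counted with the random characteristics $\phi^a$ and $\phi^i$. Nerman's theorem asserts that, under the structural and integrability hypotheses recalled below,
$$ \lim_{t\to \infty} \e^{-\alpha t} Z^{\phi}(t) = \beta^{-1} W_{\infty} \int_0^{\infty} \e^{-\alpha s}\, \E(\phi(s))\, \dd s $$
almost surely and in $L^1(\P)$, so the task reduces to checking these hypotheses for $\phi^a$ and $\phi^i$ and then identifying the integrals with $\langle m^a, f\rangle$ and $\langle m^i, f\rangle$.

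The structural assumptions are already at our disposal: the Malthusian parameter $\alpha$ exists and is characterised by \eqref{E:alpha}; $\beta = -\mathcal{L}'(\alpha) \in (0,\infty)$ is given by \eqref{E:beta}; the intrinsic martingale is $L^2$-bounded (as noted after \eqref{E:beta}), hence uniformly integrable with non-degenerate limit $W_\infty$; and Corollary \ref{C2} shows that $\mu$ admits a continuous density with respect to Lebesgue measure on $[0,\infty)$, so in particular $\mu$ is non-lattice, which is what is needed for the almost-sure mode of convergence.

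The main step is the integrability condition on the random characteristic, of the form $\E\bigl[\sup_{t\geq 0}\e^{-\alpha t}\phi(t)\bigr]<\infty$. Since $-\log(1-\delta/\rho)>0$, one may assume without loss of generality that $b\geq 0$, and fix $K$ with $f(n)\leq K\e^{bn}$ for $n\geq 1$. By the coupling of Lemma \ref{L1}, $C(t)\leq G$ for every $t\geq 0$ and $C(\zeta-)=G$, so
$$ \phi^a(t)\leq K\e^{bG}\quad\text{and}\quad \phi^i(t)\leq K\e^{bG}\qquad\text{for all }t\geq 0.$$
Since $\alpha>0$, this yields $\E\bigl[\sup_{t\geq 0}\e^{-\alpha t}\phi^{\bullet}(t)\bigr]\leq K\,\E(\e^{bG})$, and the right-hand side is finite precisely because $G$ is geometric with parameter $\delta/\rho$ and the hypothesis $b<-\log(1-\delta/\rho)$ is exactly the condition for it to have a finite exponential moment of rate $b$.

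Finally, by Fubini and the very definition of $m^a$ and $m^i$,
$$\int_0^\infty \e^{-\alpha s}\,\E(\phi^a(s))\,\dd s = \langle m^a, f\rangle,\qquad \int_0^\infty \e^{-\alpha s}\,\E(\phi^i(s))\,\dd s=\langle m^i,f\rangle,$$
and the explicit forms \eqref{E:ma} and \eqref{E:mi} make these quantities concrete. Substituting into Nerman's conclusion yields both announced limits. The only genuine obstacle is the integrability check on $\phi^a$ and $\phi^i$; once Lemma \ref{L1} reduces it to a finite exponential moment of the geometric variable $G$, the hypothesis on $b$ is visibly sharp.
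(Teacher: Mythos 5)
Your proposal follows the paper's route exactly: identify $A^f$ and $I^f$ as CMJ processes counted with the characteristics $\phi^a$, $\phi^i$ of \eqref{E:randchar}, invoke Nerman's Theorem 5.4 (and its $L^1$ companion), and reduce the integrability check on the characteristics to a finite exponential moment of the geometric variable $C(\zeta-)=G$, which is where the hypothesis $b<-\log(1-\delta/\rho)$ enters; this is precisely the paper's argument for Condition 5.2.

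The one item you leave unverified is Nerman's other hypothesis, Condition 5.1, which is an integrability requirement on the tail of the reproduction point process $\xi$ itself and is not subsumed by the items you list as ``structural assumptions'' (existence of $\alpha$, finiteness of $\beta$, uniform integrability of the intrinsic martingale, non-lattice $\mu$). The paper disposes of it in one line via
$$\int_t^{\infty} \e^{-\alpha s}\, \xi(\dd s) \leq \e^{-\alpha t}\, \xi(\R_+),$$
together with the integrability of $Z_1=\xi(\R_+)$ from Lemma \ref{L:7}, so that
$\E\bigl( \sup_{t\geq 0} \e^{\alpha t} \int_t^{\infty} \e^{-\alpha s} \xi(\dd s)\bigr)<\infty$. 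You should add this check (and, for the $L^1$ statement, note that the paper verifies Equations 3.1 and 3.2 of Nerman's Corollary 3.3 from the same two bounds); with that, your argument is complete and coincides with the paper's.
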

In particular, taking $f(n)=n$ yields the first order asymptotic behavior of the total number of contagious (respectively, isolated) individuals as time goes to infinity.
\begin{proof} The claim of almost sure convergence is seen from Theorem 5.4  in Nerman \cite{Nerman}; we just need to verify Conditions 5.1 and 5.2 there.
 For the first, we simply write
$$\int_t^{\infty} \e^{-\alpha s} \xi(\dd s) \leq \e^{-\alpha t} \xi(\R_+),$$
and recall from Lemma \ref{L:7} that $Z_1=\xi(\R_+)$ is integrable. This ensures that
$$\E\left( \sup_{t\geq 0} \e^{\alpha t} \int_t^{\infty} \e^{-\alpha s} \xi(\dd s) 
\right) < \infty.$$

For the second, we assume for simplicity that $|f(n) |\leq \e^{b n}$ for all $n\geq 1$ without loss of generality. The
 random characteristics  in \eqref{E:randchar} can be bounded for all $t\geq 0$ by
 $$|\phi^a(t)| \leq \exp(b C(\zeta-)) \quad \text{and} \quad |\phi^i(t)| \leq \exp(b C(\zeta-)).$$
 Observe that 
 $$\E\left( \exp(b C(\zeta-))\right) < \infty,$$
 since we know from Lemma \ref{L1} that $C(\zeta-)$ has the geometric distribution with success probability $\delta/\rho$,
 and $(1-\delta/\rho) \e^b <1$.
It follows immediately that Condition 5.2 of \cite{Nerman} holds for both $\phi^a$ and $\phi^i$. 

We next turn our attention to convergence in $L^1(\P)$. This relies in turn on
 \cite[Corollary 3.3 ]{Nerman} and we have to check Equations 3.1 and 3.2 there.
The latter are both straightforward from the bounds established in the first part of this proof.
\end{proof}
\begin{remark} Iksanov, Kolesko and Meiners \cite{IKM} have obtained recently a remarkable central limit theorem for  general Crump-Mode-Jagers branching processes counted with random characteristics, which specifies the fluctuations of Nerman's law of large numbers. Of course, it would be interesting to apply their results to our setting; however in order to do so, one needs information about the possible roots to the equation $\mathcal L(z)=1$ in the complex strip $\alpha/2\leq z\leq \alpha$, which does not seem easy to obtain even though the intensity $\mu$ is explicitly known. \end{remark}

We next turn our attention to the empirical distribution of the sizes of active, respectively, isolated, clusters. We denote the function identical to $1$ on $\N$ by $\mathbf 1$, so that 
in the preceding notation, 
$A^{\mathbf 1}(t)$
and $I^{\mathbf 1}(t)$ are respectively the number of active and of isolated clusters at time $t$. 
We then define the empirical distributions $\Pi^a(t)$ and $\Pi^i(t)$  for a generic function $f: \N\to \R_+$ by 
$$\langle \Pi^a(t), f\rangle = A^f(t)/A^{\mathbf 1}(t)$$
and
$$\langle \Pi^i(t), f\rangle = I^f(t)/I^{\mathbf 1}(t).$$
We also 
introduce the normalized probability measures  on $\N$
$$
\pi^a = m^a/ \langle m^a, \mathbf 1\rangle \quad \text{and} \quad \pi^i= m^i / \langle m^i, \mathbf 1 \rangle.
$$
Thanks to \eqref{E:ma} and \eqref{E:mi}, these are given explicitly  by
\begin{equation} \label{E:pia} \pi^a(k) =  c_a(1-\delta/\rho)^{k-1} {\mathrm B} (1+\alpha/\rho, k), \quad \text{for all }k\geq 1,
\end{equation}
with
$$1/c_a= \sum_{j=1}^{\infty} (1-\delta/\rho)^{j-1} {\mathrm B} (1+\alpha/\rho, j),$$
and
\begin{equation} \label{E:pii} \pi^i(k) = c_i(1-\delta/\rho)^{k-1} {\mathrm B} (\alpha/\rho, k+1), \quad \text{for all }k\geq 1,
 \end{equation}
 with
 $$1/c_i=\sum_{j=1}^{\infty} (1-\delta/\rho)^{j-1} {\mathrm B} (\alpha/\rho, j+1).$$
We can now state the convergence of the empirical distributions.

\begin{corollary}\label{C4}
Assume \eqref{E:surcrit}. Then conditionally on the event that the epidemic survives forever,
we have almost surely
$$\lim_{t\to \infty} \Pi^a(t) = \pi^a \quad\text{and} \quad \lim_{t\to \infty} \Pi^i(t) = \pi^i.$$
\end{corollary}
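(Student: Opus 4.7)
The plan is to deduce Corollary \ref{C4} directly from Theorem \ref{T1} by specializing the characteristic $f$ in two ways: to the constant function $\mathbf 1$, and to the indicator $f_k(n)=\ind{n=k}$ for each fixed $k\geq 1$. Both functions are bounded, hence trivially satisfy the growth condition $f(n)=O(\e^{bn})$ for any $b>0$, which is compatible with the requirement $b<-\log(1-\delta/\rho)$ of Theorem \ref{T1} since $0<\delta/\rho<1$.

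Applying Theorem \ref{T1} to $\mathbf 1$ gives, almost surely,
\begin{equation*}
\lim_{t\to\infty} \e^{-\alpha t} A^{\mathbf 1}(t) = \beta^{-1}\langle m^a, \mathbf 1\rangle W_\infty
\quad\text{and}\quad
\lim_{t\to\infty} \e^{-\alpha t} I^{\mathbf 1}(t) = \beta^{-1}\langle m^i, \mathbf 1\rangle W_\infty,
\end{equation*}
where both total masses are finite by the geometric decay in $k$ visible in \eqref{E:ma} and \eqref{E:mi}. Similarly, for every fixed $k\geq 1$, Theorem \ref{T1} applied to $f_k$ yields almost surely
\begin{equation*}
\lim_{t\to\infty} \e^{-\alpha t} A^{f_k}(t) = \beta^{-1} m^a(k) W_\infty
\quad\text{and}\quad
\lim_{t\to\infty} \e^{-\alpha t} I^{f_k}(t) = \beta^{-1} m^i(k) W_\infty.
\end{equation*}

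Intersecting the countably many corresponding full-probability events (one for $f=\mathbf 1$ and one for each $k\geq 1$) yields a single event of full probability on which all these limits hold simultaneously. On the survival event, which coincides almost surely with $\{W_\infty>0\}$ by the uniform integrability of the intrinsic martingale established just before Theorem \ref{T1}, the two denominators $\e^{-\alpha t} A^{\mathbf 1}(t)$ and $\e^{-\alpha t} I^{\mathbf 1}(t)$ have strictly positive limits, so one may pass to the quotient to obtain, for every $k\geq 1$,
\begin{equation*}
\Pi^a(t)(\{k\}) = \frac{A^{f_k}(t)}{A^{\mathbf 1}(t)} \;\longrightarrow\; \frac{m^a(k)}{\langle m^a,\mathbf 1\rangle} = \pi^a(k)
\end{equation*}
almost surely, and symmetrically $\Pi^i(t)(\{k\}) \to \pi^i(k)$. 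As the limits $\pi^a$ and $\pi^i$ are genuine probability measures on $\N$, Scheff\'e's lemma upgrades this atom-wise convergence to convergence in total variation, a fortiori giving the stated convergence of the random probability measures.

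The argument presents essentially no obstacle: it reduces to Theorem \ref{T1} together with taking quotients on $\{W_\infty>0\}$. The only mildly delicate point is the simultaneous control of the countably many almost-sure events, handled by a countable intersection; if one instead wanted to track a general $f$ in the growth class of Theorem \ref{T1}, one would need a slightly stronger ``separable class of test functions'' upgrade, but this is not needed for the present statement.
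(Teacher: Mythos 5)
Your proposal is correct and follows essentially the same route as the paper: both deduce the corollary by dividing the conclusions of Theorem \ref{T1} for a suitable test function by the one for $\mathbf 1$, using that $W_\infty>0$ on survival. Your version is in fact slightly more careful than the paper's one-line argument, since you handle the countably many null sets via indicators plus Scheff\'e's lemma rather than invoking the limit ``for every bounded $f$'' at once.
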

\begin{proof} Indeed,  recall that $W_{\infty}>0$ a.s. conditionally on survival of the epidemic.
We derive from Theorem \ref{T1} that on this event, 
$$\lim_{t\to \infty} A^f(t)/A^{\mathbf 1}(t) = \langle \pi^a, f\rangle
\quad\text{and} \quad  \lim_{t\to \infty} I^f(t)/I^{\mathbf 1}(t) = \langle \pi^i, f\rangle$$
for every bounded function $f:\N\to \R$. 
\end{proof}

In words, Corollary \ref{C4} states that conditionally on survival of the epidemic,   the empirical distributions of active cluster sizes and of isolated clusters sizes converge as time goes to infinity   to $\pi^a$ and $\pi^i$, respectively. 
 We shall therefore think of  the latter as describing asymptotically  the average distributions of the sizes of active clusters and of isolated clusters, respectively.
It is interesting to  point at similarities between Corollary \ref{C4} and earlier results by Deijfen   \cite[Theorem 1.1 and Example 2]{Deijfen}
on the asymptotic degree distribution for certain random evolving  networks. More specifically, in this model,  new vertices arrive  in continuous time, are then connected to an existing vertex with probability proportional the so-called fitness of that vertex,  and vertices further  die at rates depending on their accumulated in-degrees.
 Although the model considered by  Deijfen is different from ours, it bears also a clear resemblance, and the similarities between the results (and the methods as well) should not come as a surprise.

It is also interesting to observe from  the formulas \eqref{E:pia} and \eqref{E:pii} and
the elementary  identity
$$\frac{\alpha}{\rho} {\mathrm B} (\alpha/\rho, k+1) = k  {\mathrm B} (1+\alpha/\rho, k),$$
 that 
$$\pi^i(k) = \frac{k\pi^a(k)}{\sum_{j=1}^{\infty} j \pi^a(j)}, \quad \text{for all }k\geq 1.$$
In words, the average distribution of the sizes of isolated clusters is the size-biased of that of active clusters. This relation stems from the fact that the rate  at which an active cluster becomes isolated is proportional to its size. Since  the empirical distribution of active cluster sizes converges to $\pi^a$, the empirical distribution of isolated cluster sizes must converge to the size-biased version of $\pi^a$. We refer to Corollary 1 of \cite{Bansaye} and its proof for details of a rigorous argument.

\section{Concluding comments}

\subsection{Comparison with a model of Bansaye, Gu, and Yuan, and an eigenproblem}
 This work has been inspired by a recent manuscript by Bansaye \textit{et al.} \cite{Bansaye} in which the authors introduced a similar model for epidemics with contact tracing and cluster isolation. The main difference with the present one is that in \cite{Bansaye}, contaminations are  always traceable initially, but traceability gets lost at some fixed rate. In other words, edges in the contamination tree are traceable when they first appear, and become untraceable after an exponentially distributed time-laps, independently of the other edges.  As a consequence, clusters do not only grow  when new contamination events occur, but also split when a traceable edge becomes untraceable.

 Bansaye \textit{et al.} investigate the large time asymptotic behavior of the epidemic using different tools, namely they analyze first a deterministic eigenproblem for a growth-fragmentation-isolation equation which is naturally related to their setting; furthermore they also rely on known properties of random recursive trees. They establish results similar to our Theorem \ref{T1} and Corollary \ref{C4}  in terms of these eigenelements; the statements in \cite{Bansaye} are however less precise than ours,  as no explicit formulas for the eigenelements are given (only their existence  is established). 
 
 In our setting, using the notation of Section 4, the expectation of linear functionals of clusters at a given time   yields a family $(\nu_t, t\geq 0)$ of measures on $\N$ given by
 $$\langle \nu_t, f\rangle = \E(A^f(t)),$$
 where $f: \N\to \R_+$ is a generic bounded function. From the dynamics of the epidemic, one gets the evolution equation 
 \begin{equation} \label{E:GFE}
 \dd  \langle \nu_t, f\rangle =  \langle \nu_t, {\mathcal A}f\rangle \dd t, 
 \end{equation}
 with 
 \begin{equation} \label{E:GF}
 {\mathcal A}f(k) =k\left( p\gamma (f(k+1)-f(k)) + (1-p)\gamma f(1)- \delta f(k)\right);
 \end{equation}
 the initial condition $\nu_0$ is the Dirac mass at $1$ since we assume that the epidemic 
 starts from a single contagious individual. Specifically, in  \eqref{E:GF}, the term
 $kp\gamma (f(k+1)-f(k))$ accounts for the growth of a cluster from size $k$ to $k+1$, which occurs
 with rate  $kp\gamma$. The term $k(1-p)\gamma f(1)$ stems from the birth of new clusters of size $1$
  (i.e. an untraceable contamination) induced  by a cluster of size $k$, which occurs
 with rate  $k(1-p)\gamma$, and finally $-k\delta f(k)$ for the isolation of a cluster of size $k$, which occurs with rate  $k\delta$. 
  This formula for the  infinitesimal generator $ {\mathcal A}$  should be compared with Lemma 1 in \cite{Bansaye}, and notably Equation (4.15) there. 
 
 Predominantly,  growth-fragmentation equations (and more generally, evolution equations) cannot be solved explicitly, and most works in this area are concerned with the large time asymptotic behavior of its solutions;  see \cite{Bansaye} for some references. Roughly speaking, the paradigm, which stems from the Perron-Frobenius theorem for matrices with positive entries, is to resolve the eigenproblem for the infinitesimal generator, that is to determine 
 the principal eigenvalue (i.e. the eigenvalue with the largest real part) and its left eigenfunctions. The principal eigenvalue  is identified as the Malthusian parameter, and the left eigenfunction (viewed as a measure) yields the so-called asymptotic profile, that is, in our setting, the measure $m^a$ in Theorem \ref{T1}. So the analysis carried out in the present Section 4 solves indirectly this eigenproblem for \eqref{E:GF}, the solution being given by \eqref{E:alpha} and \eqref{E:pia}. Specifically, it holds for all bounded $f: \N\to \R_+$ that
 $$\langle \mathcal{A}^{\top}\pi^a , f \rangle \coloneqq\langle \pi^a ,\mathcal{A} f \rangle = \alpha\langle \pi^a, f\rangle.$$
However, it does not seem straightforward to check this identity directly, and we shall provide more details below.
   
Let $\nu_t(k)$ denote the expected number of active clusters of size $k\geq 1$ at time $t$. 
We have from the dynamics of the epidemic (see the discussion following \eqref{E:GF}), that for $k\geq 2$
\begin{equation} \label{E:MKvF}
\frac{\partial \nu_t(k)}{\partial t} + p\gamma\left( k \nu_t(k)- (k-1)\nu_t(k-1)\right)  =- \delta k \nu_t(k) ,
\end{equation} 
whereas for $k=1$,
\begin{equation} \label{E:MKvF1}
\frac{\partial \nu_t(1)}{\partial t} + p\gamma  \nu_t(1) = (1-p)\gamma \sum_{j=1}^{\infty} j\nu_t(j) -\delta \nu_t(1). 
\end{equation} 
Of course,  \eqref{E:MKvF} and \eqref{E:MKvF1} are equivalent to the evolution equation \eqref{E:GFE}.
From the point of view of age-structured population models (recall Remark \ref{R:1}), these
 should be viewed as a version of McKendrick-von Foerster PDE; see \cite[Equations (23.4) and (23.5)]{Kot}. 

Following \cite[Chapter 23]{Kot}, it is then natural to search for a special solution to 
\eqref{E:MKvF} and \eqref{E:MKvF1} in the form $\nu_t(k)= \e^{r t}\nu(k)$ for some $r>0$ and  some measure $\nu$ on $\N$, which of course amounts to solving the eigenproblem $ \mathcal A^{\top} \nu = r \nu$. 
Recall the notation \eqref{E:rho}; from \eqref{E:MKvF} and \eqref{E:MKvF1}, we get  first the linear recurrence equation
\begin{equation} \label{E:Eign2}
\nu(k) = \frac{p\gamma (k-1)}{r+\rho k}\nu(k-1),   \qquad k\geq 2, \end{equation} 
and then, for $k=1$, the identity
\begin{equation} \label{E:Eigen1}
(r+\rho) \nu(1) = (1-p)\gamma \sum_{j=1}^{\infty} j\nu(j).
\end{equation}
We readily deduce from \eqref{E:Eign2} and well-known properties of the beta function $\mathrm B$ that 
\begin{equation} \label{E:eigenf}
\nu(k) = c(1-\delta/\rho)^{k-1} \mathrm B(1+r/\rho,k ), \qquad k\geq 1,
\end{equation}
where $c>0$ is some arbitrary constant. We  note that $(r+\rho)\nu(1)/c=\rho$, and can now determine $r$ by rewriting \eqref{E:Eigen1} in the form:
\begin{align*}
\rho &=  (1-p)\gamma \sum_{j=1}^{\infty} j (1-\delta/\rho)^{j-1} \mathrm B(1+r/\rho,j )\\
&=  (1-p)\gamma \sum_{j=1}^{\infty} \int_0^1   j (1-\delta/\rho)^{j-1} (1-x)^{j-1} x^{r/\rho}\dd x \\
&=  (1-p)\gamma  \int_0^1  \frac{ x^{r/\rho}}{(1-(1-\delta/\rho)(1-x))^2}\dd x \\
&=  (1-p)\gamma \rho^2 \int_0^1  \frac{ x^{r/\rho}}{((\rho-\delta)x+\delta)^2}\dd x. 
\end{align*}
We have recovered the equation \eqref{E:alpha} that determines the Malthusian parameter. So  $r=\alpha$,
and we conclude from \eqref{E:eigenf} and \eqref{E:pia} that $\nu$ and $\pi^a$ are indeed proportional. 

The calculations above are reminiscent of those for the Leslie model \cite[Chapter 22]{Kot}, 
and in particular \eqref{E:alpha} can be thought of as an Euler-Lotka equation \cite[Equation (20.6)]{Kot}. 

\subsection{A detection paradox} We next discuss in further details the detection paradox which was mentioned in the introduction. Imagine that we rank the clusters in the increasing order of their birth times
rather than indexing them by the Ulam-Harris genealogical tree as we did previously. This sequence is infinite if and only if the epidemic survives, and conditionally on that event, its elements are independent, each being distributed as the typical cluster. One may then expect from the law of large numbers that, as time goes to infinity, the limit $\pi^i$ of the empirical distribution of the sizes of isolated clusters should coincide with the law of the size of a typical cluster at the time when it is detected, that is, by Lemma \ref{L1}, the geometric distribution with success probability $\delta/\rho$. However \eqref{E:pii} shows that this is not the case, and more precisely, $\pi^i$ is rather a biased version of the geometric law, where the bias is given by a beta function. 

The naive argument above has of course a flaw, which stems from  the fact that the empirical distribution of the isolated clusters at a given time corresponds to a partial sum of clusters which are listed in the increasing  order of  their detection times rather than of  their birth times. This reordering tends to list first clusters which are quickly detected and hence had little time to grow,
which hints at the feature that the average isolated cluster size is dominated stochastically by the size of a  typical isolated cluster. Nonetheless, reordering alone is not sufficient to explain the detection paradox; the second crucial ingredient is the exponential growth, and more precisely the fact that the number of clusters, say for simplicity  born during the time-interval $[t,t+1]$, is of the same order as the number of all the clusters born before time $t$, no matter how large $t$ is. A significant proportion of clusters born during  $[t,t+1]$ are detected before time $t+1$; due to the time constraint, these clusters have on average a smaller size than the typical cluster when it is isolated, and this explains the seeming paradox.

For a better understanding of the mechanisms at work in the explanation above, it may be useful to consider the following elementary example. Consider a Poisson point process on $\R_+\times (0,\infty)$ whose atoms are denoted generically by $(b,\ell)$, and which has intensity $\e^{b} \dd b \lambda(\dd \ell)$, where $\lambda$ is some probability measure on $(0,\infty)$.  We think of $(b, \ell)$ as an individual born at time $b$ and with lifespan $\ell$. Imagine that we want to estimate the lifespan distribution $\lambda$, that is, more specifically, the quantity $\langle \lambda,f\rangle$ for an arbitrary bounded continuous function $f: (0,\infty)\to \R$, from the observation of the population up to some large time $t$. If we could observe the lifespan of an individual at the time when it is born, then this would be an easy matter. Indeed, it then suffices to compute the empirical mean of $f(\ell)$ for individuals $(b,\ell)$ born at time $b\leq t$, and it is readily checked by Poissonian computation that this quantity converges almost surely to $\langle \lambda,f\rangle$
as $t\to \infty$. But of course assuming that the lifespan can be observed at the birth of an individual is unrealistic, and let us rather assume that  lifespan can be observed at  death only. 

The total number of dead individuals at time $t$ has the Poisson distribution with parameter
$$\int_0^t \e^b \lambda((0,t-b]) \dd b \sim \e^t \int_{(0,\infty)} \e^{-\ell} \lambda(\dd \ell), \qquad \text{as }t\to \infty.$$
More generally, it is easily checked that if  write $\langle M(t),f\rangle$ for the empirical mean of $f(\ell)$ computed for all individuals who are dead at time $t$, that is such that $b+\ell\leq t$, then 
$$\lim_{t\to \infty} \langle M(t),f\rangle =\langle \lambda_1,f\rangle,$$
where 
$$\lambda_1(\dd \ell) =  \frac{e^{-\ell}  \lambda(\dd \ell)}
{\int_{(0,\infty)} \e^{-s} \lambda(\dd s)}.$$
In other words, the empirical mean $\langle M(t),f\rangle$ is a consistent estimator of $ \langle \lambda_1,f\rangle$ rather of $\langle \lambda,f\rangle$. 

We stress that this detection paradox disappears for a version of this model where the intensity of the Poisson point process only
grows sub-exponentially in time, say for simplicity is given by $b^r \dd b \lambda(\dd \ell)$ for some $r>0$. The same calculation as  above easily shows that  the empirical mean of $f(\ell)$ computed for all individuals who are dead by time $ t$ then does converge to $\langle \lambda,f\rangle$ as $t\to \infty$. 

\subsection {Relation to the Yule-Simon distribution}
Following G. Udny Yule \cite{Yule}, 
 Herbert A. Simon \cite{Simon} introduced in 1955  an elementary model  depending  on a parameter $q\in(0,1)$  (that accounts for the memory of the model),  which nowadays  would be referred to as an algorithm with preferential attachment.  Simon's algorithm produces a random text, that is a long string of words $w_1\ldots w_n$, as follows.
Once the first word $w_1$ has been written, for each $j=1, \ldots,n-1$,  $w_{j+1}$ is copied from a uniform sample from $w_1, \ldots, w_j$ with probability  $q$, and with complementary probability $1-q$, $w_{j+1}$ is a new word different from all the preceding. Simon proved that for every fixed $k\geq 1$, 
the expected proportion of different words which have been written exactly $k$ times in the text
converges as $n\to \infty$ towards
\begin{equation}\label{E:YU} 
\sigma_q(k)=\frac{1}{q} \mathrm B(1+1/q, k).
\end{equation}
The probability measure  on $\N$, $\sigma_q=(\sigma_q(k), k\geq 1)$, is known as  the Yule-Simon distribution with parameter $1/q$. 
Comparing \eqref{E:pia} with \eqref{E:YU}, we can now view the average distributions of the sizes of active clusters $m^a$ as an exponentially tilted version of the Yule-Simon distribution with parameter $\alpha/\rho$. 

In this direction, we observe that  the limiting case of our model with $\delta=0$, which corresponds to a degenerate case  where detection is absent, merely rephrases Simon's algorithm
with memory parameter $q=p$. 
When there is no detection, the evolution of a typical cluster is just that of a Yule process with rate $p\gamma$ (without killing). Then the intensity measure of birth times of new clusters given by $\mu(\dd t)= (1-p) \gamma \e^{p\gamma t} \dd t$ and the Malthusian parameter can be identified by solving
$$(1-p) \gamma \int_0^{\infty} \e^{(p\gamma-\alpha)t}\dd t=1.$$
We have plainly  $\alpha = \gamma$ and the parameter of the Yule-Simon distribution is simply $\alpha/\rho=1/p$. In this setting, the degenerate case of Corollary \ref{C4} for $\delta =0$ can be viewed a strong version of Simon's result, where the converge is almost-sure and not just for the expectation.
See \cite[Example B.11]{HolJan} for a closely related discussion in the setting of Yule's original 
model of evolution of species, which is a bit different but nonetheless also yields the Yule-Simon distribution \eqref{E:YU}.

  \vskip 1cm
    \noindent\textbf{Acknowledgment.} I would like to thank Vincent Bansaye for pointing at some similarities with age-structured models and, in particular, at the existence of explicit solutions to eigenproblems for the latter. I am also grateful to two anonymous referees for their careful reading of the first version of this work and their constructive comments. 
\bibliography{tracisol.bib}

\end{document}